\newtheorem{theorem}{Theorem}[section]
\newtheorem{proposition}[theorem]{Proposition}
\newtheorem{corollary}[theorem]{Corollary}
\newtheorem{lemma}[theorem]{Lemma}
\theoremstyle{definition}
\theoremstyle{remark}
\newtheorem{remark}[theorem]{Remark}
\numberwithin{equation}{section}
\def\P{\ensuremath{\mathsf{p}}}
\def\Q{\ensuremath{\mathsf{q}}}
\newcommand{\Sh}{\ensuremath{\textsf{S}}}
\newcommand{\F}{\ensuremath{\mathcal{F}}}
\newcommand{\dank}{\textsf{Acknowledgments.\ }}
\newcommand{\RR}{\mathbb R}
\newcommand{\metric}{\ensuremath{ \mathrm{g} }}
\def\ol{\overline}
\def\ra{\rightarrow}
\def\In{\subseteq}
\def\tr{\textrm{tr}}
\def\RR{\mathbb{R}}
\def\mc{\mathcal}
\def\fol{\mc{F}}
\begin{document}


\title{Isometries between leaf spaces}

\author{Marcos M. Alexandrino}

\address{ 
Universidade de S\~{a}o Paulo\\
Instituto de Matem\'{a}tica e Estat\'{\i}stica\\
 Rua do Mat\~{a}o 1010,05508 090 S\~{a}o Paulo, Brazil}
\email{marcosmalex@yahoo.de, malex@ime.usp.br}

\author{Marco Radeschi}

\address{ 
Mathematisches Institut\\
 WWU M\"unster, Einsteinstr. 62, M\"unster, Germany.}

\email{mrade\_02@uni-muenster.de}

\thanks{The first author was supported by a 
research productivity scholarship  from CNPq-Brazil
 and partially supported by FAPESP (S\~{a}o Paulo, Brazil). The second author was partially supported by Benjamin Franklin Fellowship at the University of Pennsylvania}

\subjclass{Primary 53C12, Secondary 57R30}

\keywords{Singular Riemannian foliations, Myers-Steenrod theorem, orbit spaces}

\begin{abstract}
In this paper we prove that an  isometry between  orbit spaces of two proper isometric actions is smooth if it preserves  the codimension of the orbits or if the orbit spaces have no boundary. In other words,  we  generalize   Myers-Steenrod's theorem for orbit spaces. These results are proved  in the more general context of singular Riemannian foliations.
\end{abstract}


\maketitle

%
%
\section{Introduction}

Given a Riemannian manifold $M$ on which a compact Lie group $G$ acts by isometries, the quotient $M/G$ is in general not a manifold. Nevertheless, the canonical projection $\pi:M\to M/G$ gives $M/G$ the structure of a Hausdorff metric space. Moreover, following  Schwarz \cite{Schwarz} one can define a ``smooth structure'' on $M/G$ to be the $\mathbb{R}$-algebra $C^{\infty}(M/G)$ consisting of functions $f:M/G\to \mathbb{R}$ whose pullback $\pi^*f$ is a smooth, $G$-invariant function on  $M$. If $M/G$ is a manifold, the smooth structure defined here corresponds to the more familiar notion of smooth structure.
A map $F:M/G\to M'/G'$ is called \emph{smooth} if the pull-back of a smooth function $f\in C^{\infty}(M'/G')$ is a smooth function in $M/G$. 

These concepts can actually be formulated in the wider context of singular Riemannian foliations. 
A singular foliation $\F$ on $M$ is called \emph{singular Riemannian foliation} (SRF for short) if 
 every geodesic  perpendicular  to one leaf   is perpendicular to every leaf it meets, see \cite[page 189]{Molino}.

A typical example of a singular Riemannian foliation is  the partition of a Riemannian manifold into the connected components of the orbits of an isometric action. Such singular Riemannian foliations are called \emph{Riemannian homogeneous}.

Given $(M,\F)$, one can define a quotient $M/\F$, also called \emph{leaf space}. If the leaves of $\fol$ are closed, $M/\fol$ can again be endowed with a metric structure and a smooth structure, exactly as in the case of group actions.

When dealing with Riemannian manifolds, a theorem of Myers and Steenrod states that the metric structure of a Riemannian manifold uniquely determines its smooth structure. In the same way, one can ask whether the metric structure on a quotient $M/G$ or $M/\F$ uniquely determines its smooth structure in the sense described above. This question can be restated in the following way: given an isometry
\[
F: M/\F\to M'/\F'
\]
between the quotients of two Riemannian manifolds, is $F$ smooth?

Classic theorems, like the Chevalley Restriction Theorem \cite{Chevalley} and the Luna-Richardson Theorem \cite{LunaRichardson} give a positive answer when $\F,\F'$ come from  some special group actions. Recently, Alexander Lytchak and the first named author generalized the results above, answering the question in the positive for special foliations $\F,\F'$ (namely \emph{infinitesimally polar} foliations, cf. \cite{AlexLytchak}). Nevertheless, a general answer to this question is not known, even for  isometric group actions.

In the present paper we provide a new sufficient condition for an isometry to be smooth.

\begin{theorem}
\label{main-theorem}
Let $M_1$ and $M_2$ be complete Riemannian manifolds  and $(M_{1},\F_{1})$, $(M_{2},\F_{2})$ be singular Riemannian foliations with closed leaves. 
Assume that there exists an isometry $\varphi: M_{1}/\F_{1}\to M_{2}/\F_{2}$ that preserves the codimension of the leaves. Then $\varphi$ is a smooth map. 
\end{theorem}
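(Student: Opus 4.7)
The plan is to show that $\varphi$ pulls back smooth basic functions to smooth basic functions, by working locally on $M_1/\F_1$ and inducting on the codimension of the leaves. Let $M_i^{\mathrm{reg}}\subseteq M_i$ denote the union of regular (maximal-dimensional) leaves of $\F_i$; this is a saturated open dense subset on which $\F_i$ is a regular Riemannian foliation with closed leaves, so $M_i^{\mathrm{reg}}/\F_i$ is locally a smooth Riemannian manifold (globally a Riemannian orbifold). Because $\varphi$ preserves the codimension of the leaves, it restricts to an isometry $M_1^{\mathrm{reg}}/\F_1 \to M_2^{\mathrm{reg}}/\F_2$; applying the classical Myers--Steenrod theorem locally (in its orbifold version) shows that this restriction is smooth. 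This establishes the base case of the induction: for every smooth $\F_2$-basic function $f$ on $M_2$, the function $\varphi^{*}f$ lifts to a smooth basic function on $M_1^{\mathrm{reg}}$.

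For the inductive step, fix a non-regular leaf $L_1\subset M_1$ and a point $x_1\in L_1$. By the slice theorem for SRFs, a tubular neighborhood $\tub(L_1)/\F_1$ is isometric to a metric cone $\mathrm{Cone}(S_{x_1}/\F_1^{x_1})$, where $S_{x_1}$ is the unit normal sphere at $x_1$ and $\F_1^{x_1}$ is the (linear) infinitesimal foliation. Cone apices are metrically characterized (e.g.\ as the most singular points of their neighborhoods), so the codimension-preserving isometry $\varphi$ sends $[L_1]$ to the apex $[L_2]$ of the analogous cone at the image leaf $L_2:=\varphi(L_1)$, and induces a codimension-preserving isometry between the sphere quotients $S_{x_1}/\F_1^{x_1}\to S_{x_2}/\F_2^{x_2}$. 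These sphere quotients live in foliated manifolds of strictly smaller dimension, so the inductive hypothesis gives smoothness of the induced sphere-level map. Combined with the base case applied to the regular stratum of the slice, this yields that the pullback $\varphi^{*}g$ of any smooth basic $g$ near $L_2$ is smooth in the radial and angular directions on $\tub(L_1)$ away from $L_1$.

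The main obstacle is the last assembly step: smoothness of a function near a singular leaf does \emph{not} follow from separate smoothness in the angular and radial directions on the local cone. One must verify that $\varphi^{*}g$ lifts to a genuinely smooth basic function on $\tub(L_1)$, which amounts to matching Taylor expansions at the apex; smooth basic functions on a linear SRF $(\Sigma_{x_1},\F_1^{x_1})$ are characterized by certain symmetries of their formal Taylor series (evenness in the radial variable, invariance by the infinitesimal foliation), and one must check that the pullback through $\varphi$ preserves these symmetries. The codimension-preserving hypothesis is essential here, since it guarantees that $\varphi$ respects the whole stratification and in particular is compatible with the linear SRF structure on each slice; this compatibility is precisely what allows the angular and radial data provided by the induction to assemble into a smooth basic function on all of $\tub(L_1)$, completing the step and hence the proof.
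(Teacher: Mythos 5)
Your outline correctly identifies the reduction to the infinitesimal foliation and the role of Myers--Steenrod on the regular/orbifold part, but it has a genuine gap exactly where you flag it, and that gap is not incidental --- it is the whole problem. You reduce to showing that $\varphi^{*}g$ extends smoothly across the apex of the cone $V_{p_1}^{\perp}/\F_{p_1}$ given smoothness on the regular part and on the round sphere quotients, and you propose to close this by ``matching Taylor expansions at the apex,'' invoking a characterization of smooth basic functions on a linear SRF in terms of symmetries of their formal Taylor series. No such characterization is available in general. It does hold for polar foliations (essentially the generalized Chevalley restriction theorem and the Luna--Richardson theorem), which is precisely why the previously known cases --- \cite{Chevalley}, \cite{LunaRichardson}, \cite{AlexLytchak} --- handle infinitesimally polar foliations. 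Establishing such a Taylor-series characterization for arbitrary SRFs on Euclidean space is at least as hard as the theorem you are trying to prove, so your induction is circular at the step that matters. Moreover, even granting the angular information from the sphere-level map, smoothness of $\varphi^{*}g$ on each sphere $\{|v|=r\}$ plus smoothness on the open dense regular stratum genuinely does not imply smoothness at the origin; the Taylor data you would need to glue is exactly what the codimension hypothesis alone does not hand you.

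The paper avoids this obstruction entirely by a different mechanism. After the same reduction to $(V_{p_i}^{\perp},\F_{p_i})$, it does not attempt an inductive angular/radial decomposition. Instead it (i) shows directly, by a first-order Taylor argument comparing $f$ with its pullback from the singular stratum via the tubular projection, that \emph{any} isometry of leaf spaces is already $C^{1}$ (Proposition~\ref{proposition-C1}); (ii) uses the focal-point/shape-operator analysis on Euclidean space and the orbifold part of the quotient to show that a codimension-preserving isometry carries the projected mean curvature field $H_{1*}$ to $H_{2*}$ (Proposition~\ref{proposition-isometry-preserve-meancurvature}); (iii) combines this with the formula $\bigtriangleup_{M}f=\bigtriangleup_{M/\F}f-\metric(\nabla f,H_{*})$ on the regular part to obtain $\bigtriangleup(\varphi^{*}f)=\varphi^{*}(\bigtriangleup f)$ there, then extends this identity weakly across the singular set using that it has codimension $\geq 2$ (Lemma~\ref{lemma1-theorem-myers-steenrod}); and finally (iv) invokes elliptic regularity to bootstrap from $C^{1}$ to $C^{\infty}$. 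The elliptic regularity step is the substitute for your Taylor-series matching: it gives smoothness without ever needing a structural description of which jets occur as jets of basic functions. If you want to rescue your approach, that is the ingredient you would have to replace it with; as written, the proposal does not constitute a proof.
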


\begin{remark}
Notice that not every isometry $\varphi:M_1/\F_1\to M_2/\F_2$, which preserves the codimension of the leaves, lifts to a foliated diffeomorphism  $M_1\to M_2$. This fact can be illustrated with examples constructed
via a procedure called \emph{suspension of homomorphism}, see e.g. \cite[sec. 3.7]{Molino}.
Also notice that in \cite{FKM} the authors produce arbitrary numbers of pairwise non isometric foliations $(V_i,\F_i)$ on  vector spaces of the same dimension, having isometric 2-dimensional leaf spaces and the same codimension of the leaves. 
\end{remark}

\begin{remark}
The above theorem implies that if $M_{i}/\F_{i}$ are  isometric orbifolds, then they are diffeomorphic in the sense of Schwarz and hence in the classical sense, see e.g., Strub \cite{Strub} and  Swartz \cite[Lemma 1]{Swartz}. 
\end{remark}

 In the special case of leaf spaces without boundary (see definition in Section \ref{preliminaries}), a small modification in the proof of Theorem \ref{main-theorem} allow us to prove the next result.

\begin{theorem}
\label{proposition-main-theorem-no-boundary}
Let $(M_i,\F_i)$, $i=1,2$, be singular Riemannian foliations with closed leaves, and $\varphi:M_1/\F_1\to M_2/\F_2$ be an isometry. If $M_1/\F_1$ has no boundary, then $\varphi$ is smooth.
\end{theorem}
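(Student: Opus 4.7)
The plan is to follow the proof of Theorem~\ref{main-theorem} almost verbatim, using the no-boundary hypothesis in place of the explicit codimension-preservation assumption wherever the latter is invoked. The starting observation is that having no boundary is intrinsic to the metric structure of the leaf space: the boundary strata correspond to leaves whose slice representation contains a reflection, and these are precisely the codimension-one strata of the quotient, a purely metric notion. Hence the no-boundary hypothesis transfers from $M_1/\F_1$ to $M_2/\F_2$ through $\varphi$, so in both leaf spaces every singular stratum has codimension at least two.

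This already forces the correct behaviour on the top stratum. Under the no-boundary assumption, the principal stratum of $M_i/\F_i$ is the unique open dense subset that is locally isometric to a Riemannian manifold, so $\varphi$ maps principal to principal and restricts there to a bijective Riemannian isometry. By Myers--Steenrod this restriction is smooth, and in particular $\varphi$ preserves the codimension of every principal leaf. Thus for any $f\in C^{\infty}(M_2/\F_2)$, the pull-back $\pi_1^{\ast}(\varphi^{\ast}f)$ is a continuous $\F_1$-basic function on $M_1$ that is already smooth on the open dense set of regular points.

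The core of the proof is to promote this to smoothness across the singular strata, by re-running the local argument of Theorem~\ref{main-theorem} near a singular leaf $L_1$ and its image $L_2$. The crucial structural point is that the no-boundary hypothesis is inherited by slices: a neighbourhood of $\pi_1(L_1)$ in $M_1/\F_1$ is modelled on the orbit space of the slice representation at $L_1$, which is again a leaf space with no boundary. One thereby sets up an induction on the depth of the stratification in which, at each step, $\varphi$ restricts to an isometry between lower-depth slice leaf spaces to which the inductive hypothesis applies.

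The main obstacle, and the site of the ``small modification'' of the proof of Theorem~\ref{main-theorem}, is the inductive step itself, in which the absence of boundary must take the place of codimension preservation. Smoothness on a dense open set plus continuity does not in general imply smoothness, so one cannot naively extend $\pi_1^{\ast}(\varphi^{\ast}f)$ across the codimension-two singular locus. The right argument has to exploit that $\pi_1^{\ast}(\varphi^{\ast}f)$ is basic and that its transverse behaviour is controlled by the isometry between slice orbit spaces produced by $\varphi$; the task is to show that, in the no-boundary setting, this transverse structure is rigid enough to force smoothness through the singular part.
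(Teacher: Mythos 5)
Your proposal correctly identifies the outer shell of the argument (no boundary transfers through the isometry; the difficulty is crossing the singular locus; the no-boundary hypothesis must stand in for codimension preservation), but it stops short of the actual content: the final paragraph only states that ``the right argument has to exploit \ldots'' and that ``the task is to show \ldots''\,, without supplying the mechanism. That sentence is precisely the gap. The paper's ``small modification'' is a concrete geometric observation that your sketch never reaches: after the blow-up reduction to the infinitesimal foliation at $p_i$, when the quotient has no boundary the singular strata have quotient-codimension at least two, so \emph{almost every} horizontal geodesic projects to a geodesic that stays entirely inside the orbifold part of the quotient. Along such a geodesic the decomposition of the focal index from \cite[Lemma 5.2]{LytchakThorbergsson2} has vanishing vertical contribution, i.e.\ \emph{all} focal points of the principal leaf are conjugate points of the projected quotient geodesic. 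An isometry between quotients automatically preserves conjugate points and their multiplicities, so $\varphi$ preserves the eigenvalues of the shape operator $\Sh_x$ in the Euclidean picture, hence the mean curvature vector field $H_i^*$ of the principal leaves --- \emph{without} assuming codimension preservation. This is exactly Remark following Proposition~\ref{proposition-isometry-preserve-meancurvature}. Once mean curvature preservation is established, Theorem~\ref{proposition-main-theorem-no-boundary} follows from Proposition~\ref{theorem-myers-steenrod} exactly as Theorem~\ref{main-theorem} does. Your proposal never mentions focal or conjugate points, shape operators, or mean curvature, which are the actual substance of the modification.

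A secondary, structural misreading: you frame the needed argument as an induction on the depth of the stratification, with the hypothesis applied to slice leaf spaces at each step. The paper's proof is not inductive. It establishes three global facts --- $\varphi$ is $C^1$ (Proposition~\ref{proposition-C1}), $\varphi$ preserves the basic mean curvature field, and $\Delta(\varphi^*f) = \varphi^*(\Delta f)$ holds weakly across the codimension-$\geq 2$ singular set (Lemma~\ref{lemma1-theorem-myers-steenrod}) --- and then applies elliptic regularity to bootstrap $\varphi^*f$ to $C^\infty$ in one shot. Your remark that ``smooth on a dense open set plus continuity does not imply smooth'' is correct and well placed, but the paper's remedy is the weak-Laplacian identity plus regularity theory, not a stratum-by-stratum induction; that mechanism is absent from your proposal.
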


As an immediate corollary of Theorem \ref{main-theorem}, we obtain the following
\begin{corollary}\label{flow-is-smooth}
Let $(M,\F)$ be a  singular Riemannian foliation with closed leaves and $\varphi: M/\F\times (-\epsilon,\epsilon)\to M/\F$ a continuous family of isometries $\varphi_t:M/\F\to M/\F$ such that $\varphi_0=\textrm{id}_{M/\F}$. Then each $\varphi_t$ is smooth.
\end{corollary}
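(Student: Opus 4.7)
The plan is to apply Theorem~\ref{main-theorem} by verifying that each isometry $\varphi_t$ preserves the codimension of the leaves. Writing $c\colon M/\F\to\N$ for the codimension function, the goal reduces to showing $c\circ\varphi_t=c$ for every $t\in(-\eps,\eps)$.

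Fix $x\in M/\F$ and let $\Sigma$ denote the stratum containing $x$, that is, the connected component of $\{y:c(y)=c(x)\}$ through $x$. Consider the continuous path $\gamma(t):=\varphi_t(x)$, with $\gamma(0)=x\in\Sigma$. I will show that the set
\[
S_x:=\{t\in(-\eps,\eps):\gamma(t)\in\Sigma\}
\]
is both open and closed in $(-\eps,\eps)$; since $0\in S_x$ and the interval is connected, this forces $S_x=(-\eps,\eps)$, and consequently $c(\varphi_t(x))=c(x)$.

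The crucial input is that each $\varphi_t$ is an isometry, so it sends an open neighborhood of $x$ isometrically onto an open neighborhood of $\gamma(t)$; in particular the tangent cones $T_x(M/\F)$ and $T_{\gamma(t)}(M/\F)$ are isometric as metric spaces. Combined with local finiteness of the stratification, this yields both openness and closedness of $S_x$. For openness at $t_0\in S_x$: if $U$ is a small neighborhood of $\gamma(t_0)\in\Sigma$ meeting only $\Sigma$ and the less-singular strata adjacent to $\Sigma$, then for $t$ near $t_0$ we have $\gamma(t)\in U$; since $\gamma(t)$ shares the tangent-cone type of $x$ while adjacent strata carry different tangent-cone types, $\gamma(t)\in\Sigma$. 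For closedness: a limit $\gamma(t_n)\to\gamma(t_0)$ with $\gamma(t_n)\in\Sigma$ yields $\gamma(t_0)\in\overline\Sigma$, but among the strata comprising $\overline\Sigma$ only $\Sigma$ itself has the correct tangent-cone type, so $\gamma(t_0)\in\Sigma$.

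The main obstacle is the structural fact that, within a small neighborhood of any point of $M/\F$, distinct strata carry distinct tangent-cone (equivalently, local-metric-germ) types. This is a consequence of the slice theorem for singular Riemannian foliations: the infinitesimal foliation is more collapsed along more singular strata, producing quotient cones with strictly smaller principal isometry group or distinct link geometry. This input is part of the paper's preliminaries and already plays a role in the proof of Theorem~\ref{main-theorem}.
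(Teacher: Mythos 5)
Your overall strategy — reduce to verifying that each $\varphi_t$ preserves the codimension of the leaves, by showing via a connectedness (open-and-closed) argument that the path $t\mapsto\varphi_t(x)$ never leaves the stratum of $x$ — is the right one, and since the paper labels this an ``immediate corollary'' with no written proof, it is presumably also what the authors had in mind. The continuity-in-$t$ argument is correctly set up, and the reduction to Theorem~\ref{main-theorem} is sound.

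The weak point is the justification of your crucial lemma, namely that in a small neighborhood of a point of $M/\F$ the strata are distinguished by their tangent-cone isometry types. Your explanation (``strictly smaller principal isometry group or distinct link geometry'') is vague: it does not identify a concrete metric invariant of the cone that separates the strata, and ``principal isometry group'' of a metric cone is not a defined notion. Moreover, you assert this input is ``part of the paper's preliminaries,'' which is not accurate — the paper's Section~\ref{preliminaries} introduces the infinitesimal foliation $(V_p^\perp,\F_p)$ and identifies $V_p^\perp/\F_p$ with the tangent cone, but nowhere states a result of the form you need. The clean way to fill this in is via the de~Rham splitting of the tangent cone: for a homothety-invariant closed SRF on a Euclidean space, $V_p^\perp/\F_p$ splits isometrically as $\Fix(\F_p)\times\bigl(\Fix(\F_p)^\perp/\F_p'\bigr)$ with the second factor containing no lines, so the maximal Euclidean de~Rham factor of the tangent cone at $p^*$ has dimension exactly $\dim\Fix(\F_p)=\dim\pi(\Sigma_p)$. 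Since less singular adjacent strata $\Sigma'$ have $\dim\pi(\Sigma')>\dim\pi(\Sigma)$ and more singular strata in $\overline\Sigma$ have strictly smaller $\dim\pi(\cdot)$, this metric invariant separates $\Sigma$ from every stratum it meets locally, which is exactly what your openness and closedness steps require. As written, your proof asserts the conclusion of this lemma without supplying a genuine argument for it, so this is a real gap even though the statement is true and your overall route is correct.
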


\begin{remark}
Flows of isometries on the leaf spaces of  foliations appear naturally in the study of the dynamical behavior of \emph{non closed} singular Riemannian foliations. Recall that a (locally closed) singular Riemannian foliation $(M,\F)$ is locally described by  submetries $\pi_{\alpha}:U_{\alpha}\to U_{\alpha}/\F_{\alpha}$, where $\{U_{\alpha}\}$ is an open cover of $M$ and $\F_{\alpha}$ denotes the restriction of $\F$ to $U_{\alpha}$. If a leaf $L$ is not closed, one might be interested to understand how it intersects a given neighborhood $U_{\alpha}$, and in particular how the closure $\overline{L}$ of $L$ intersects $U_{\alpha}$. In the regular case, the local quotient $U_{\alpha}/\F_{\alpha}$ is a manifold, and it turns out (cf. \cite[Thm 5.2]{Molino}) that the projection $\pi_{\alpha}(\overline{L}\cap U_{\alpha})$ is a submanifold, which is spanned by flows of isometries $\varphi_{\alpha}$ on $U_{\alpha}/\F_{\alpha}$. As one tries to generalize this result to singular Riemannian foliations, the main difficulty is that the local quotient $U_{\alpha}/\F_{\alpha}$ is no longer  a manifold. In particular, when studying the smoothness of the flows of isometries $\varphi_{\alpha}$ (which still exist) one cannot rely on classical theorems anymore, hence the need to develop new techniques to deal with these  more general situations. Corollary \ref{flow-is-smooth} is a first result in this direction. Other results on this topic are the center of a forthcoming paper. 
\end{remark}

The paper is organized as follows: in Section \ref{sketch} we give an outline of the proof of Theorem \ref{main-theorem}, when $\F_1$ and $\F_2$ come from an isometric group actions. In Section \ref{preliminaries}  we recall  some definitions and results that we will use later on. Finally, in Section \ref{proof1} we prove Theorem \ref{main-theorem} and Theorem \ref{proposition-main-theorem-no-boundary}.



\dank The authors are grateful to Alexander Lytchak for inspiring the main questions of this work, and for very helpful discussions and suggestions. The authors also thank Wolfgang Ziller, Dirk T\"{o}ben, Ricardo Mendes and Renato Bettiol for useful suggestions.
%
%

\bigskip

\section{Sketch of the proof of Theorem \ref{main-theorem}}
\label{sketch}

We provide here the main ideas of the proof of Theorem \ref{main-theorem} in the special case of singular Riemannian foliations $(M_i,\F_i)$, $i=1,2$, given by the orbits of a closed group $G_i$ acting by isometries of $M_i$ (\emph{Riemannian homogeneous foliation}). For more details on group actions, we refer the reader to \cite[Ch. 5]{PalaisTerng}. As in Theorem \ref{main-theorem} we assume that there exists an isometry $\varphi:M_1/G_1\to M_2/G_2$ that preserves the codimension of the orbits. 

We divide the proof into 3 steps:
\\

\emph{Step 1: Reducing the problem to Euclidean space.}

Let $p_i\in M_i$, $L_i=G_i(p_i)$ the orbit through $p_i$, $V_i^{\perp}=\nu_pL_i$ the normal space to the orbit, and $S_{p_i}=\exp_{p_i}(V_i^{\perp}) \cap B_{\epsilon}(p_i)$ a \emph{slice} through $p_i$. The isotropy group $G_{p_i}=({G_i})_{p_i}$ acts on $V_i^{\perp}$ by the \emph{isotropy representation}, and by the Slice Theorem the exponential map $\exp_{p_i}:V_i^{\perp}\cap B_{\epsilon}(0)\to S_{p_i}$ descends to a smooth map
\[
\exp_{i}^*:V_i^{\perp}/G_{p_i}\to S_{p_i}/(\F_i\cap S_{p_i})\simeq U_i/G_i
\]
where $U_i=B_{\epsilon}(L_i)$ and $U_i/G_i$ is a small neighborhood of the projection $p_i^*$ of $p_i$, in $M_i/G_i$.

Suppose that $\varphi(p_1^*)=p_2^*$. We first observe that the restriction $\varphi: U_1/G_1\to U_2/G_2$ gives rise to an isometry $\varphi_*=(\exp_2^*)^{-1}\circ \varphi \circ \exp_1^*:V_1^{\perp}/G_{p_1}\to V_2^{\perp}/G_{p_2}$ between quotients of Euclidean spaces $V_1^{\perp}$, $V_2^{\perp}$. In fact, it is known that the Euclidean metric $\metric_{p_i}$ on $V_i^{\perp}$ is the limit of a family of local metrics $\metric^i_{\lambda}$ constructed via ``homothetic transformations'' around $p_i$. It easy to check that $\varphi:(U_1/G_1, \metric^1_{\lambda} ) \to (U_2/G_2,  \metric^2_{\lambda} )$ is an isometry, and by taking the limit as $\lambda\to 0$ we obtain that $\varphi_*$ is an isometry. In order to prove the smoothness of $\varphi$, it is enough to prove the smoothness of $\varphi$ around $p_1^*$. In particular, it is enough to prove that $\varphi_*$ is smooth, and this concludes Step 1.

\emph{Step 2: Exploiting the properties of Euclidean space.}

Since $V_i^{\perp}$ is an Euclidean space, there is an explicit relation between the eigenvalues of the shape operator of a principal orbit in a direction $x$, and the zeroes of special Jacobi fields, called \emph{projectable Jacobi fields}, along the geodesic with initial velocity $x$. Since the zeroes of projectable Jacobi fields can be read off from the quotient $V_i^{\perp}/G_{p_i}$ and from the dimension of the orbits, it follows that the eigenvalues of the shape operator remain constant along a principal orbit. In particular, the mean curvature vector field $H_i$ of the principal orbits projects to a well defined vector field $H_i^*$ in the principal part of $V_i^{\perp}/G_{p_i}$.

For the same reason, if $\varphi$ preserves the codimension of the orbits, then the focal points of an orbit $q_1^*\in V_1^{\perp}/G_{p_1}$ correspond to those of the orbit $q_2^*=\varphi(q_1^*)\in V_2^{\perp}/G_{p_2}$. In particular we obtain the following
\begin{lemma}
\label{sketch-lemma1}
   $  d \varphi (H_{1}^*)=H_{2}^*$ in the quotient of the principal stratum.
\end{lemma}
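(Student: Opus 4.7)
The plan is to characterize $H_i^*$ through the pairings $\langle H_i^*, x^*\rangle$ with arbitrary tangent vectors $x^*$, and to show that each such pairing is determined by data $\varphi$ is known to preserve --- the metric on the quotient together with the codimension function of the leaves.

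First I would set up a pointwise comparison. Fix a principal point $q_1^* \in V_1^\perp/G_{p_1}$ and set $q_2^* := \varphi(q_1^*)$, with lifts $q_i \in V_i^\perp$ and principal orbits $L_i = G_{p_i}(q_i)$. Since $\varphi$ preserves the codimension of the leaves, it sends the principal stratum to the principal stratum, and the differential $d\varphi_{q_1^*}$ is a linear isometry between the tangent spaces of the (smooth Riemannian) principal strata. Given a unit vector $x_1^* \in T_{q_1^*}(V_1^\perp/G_{p_1})$, I horizontally lift it to $x_1 \in \nu_{q_1} L_1$, and set $x_2^* := d\varphi(x_1^*)$ with horizontal lift $x_2 \in \nu_{q_2} L_2$.

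Next I would extract the spectrum of the shape operator from the quotient. The Euclidean rays $\gamma_i(t) = q_i + tx_i$ are horizontal geodesics and project to geodesics $\gamma_i^*$ of the principal stratum with $\varphi \circ \gamma_1^* = \gamma_2^*$. A projectable Jacobi field along $\gamma_i$ with initial value $v \in T_{q_i} L_i$ has the form $(I - tS_{x_i}) v$ in a parallel frame, where $S_{x_i}$ is the shape operator of $L_i$ at $q_i$ in the direction $x_i$; hence its zeros, the focal times of $L_i$ along $\gamma_i$, are the reciprocals of the nonzero eigenvalues of $S_{x_i}$, counted with eigenspace multiplicity. These focal times are exactly the instants at which $\gamma_i^*$ leaves the principal stratum, with multiplicity equal to the drop in orbit dimension --- i.e.\ to the jump in codimension. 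Since $\varphi$ takes $\gamma_1^*$ isometrically to $\gamma_2^*$ and preserves codimensions, the two multisets of focal times coincide, so $S_{x_1}$ and $S_{x_2}$ share the same nonzero spectrum with multiplicity and
\[
\langle H_1, x_1 \rangle = \tr S_{x_1} = \tr S_{x_2} = \langle H_2, x_2 \rangle.
\]
Because the projection is a Riemannian submersion on the principal stratum, $\langle H_i^*, x_i^*\rangle = \langle H_i, x_i\rangle$; combining this with the isometry of $\varphi$ yields $\langle d\varphi(H_1^*), x_2^*\rangle = \langle H_2^*, x_2^*\rangle$ for every $x_1^*$, whence $d\varphi(H_1^*) = H_2^*$.

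The main obstacle is the focal-multiplicity bookkeeping: one must verify carefully that the focal times of $L_i$ along $\gamma_i$ are detected, with the correct multiplicities, by the instants at which $\gamma_i^*$ exits the principal stratum, and that preservation of the codimension function consequently yields equality of the \emph{multisets} of focal times, not merely of the underlying sets. Once this is in place the remaining linear algebra and Riemannian-submersion identifications are routine.
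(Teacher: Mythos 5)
Your plan is on the right track — reduce to comparing focal data of $L_{q_i}$ along the horizontal Euclidean rays $\gamma_i$, then use trace equality — and this is indeed what the paper does (in Proposition \ref{proposition-isometry-preserve-meancurvature}). However, the key step where you identify the focal times is wrong, and this is exactly where the real work lies.

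You claim that the zeros of the $L_i$-Jacobi fields along $\gamma_i$ occur \emph{exactly} at the instants when $\gamma_i^*$ leaves the principal stratum, with multiplicity equal to the jump in codimension. This is false in general. By \cite[Lemma 5.2]{LytchakThorbergsson2}, which is the tool the paper actually relies on, the focal index of $L_{q_i}$ along $\gamma_i$ splits into \emph{two} contributions: a \emph{vertical} index, counting singular points with multiplicity equal to the drop in leaf dimension (this is the part you account for), and a \emph{horizontal} index, counting conjugate points of $\pi(q_i)$ along the projected geodesic $\gamma_i^*$ inside the orbifold part of the quotient. The horizontal contribution is not captured by the stratification at all: a horizontal geodesic in $\mathbb{R}^n$ can have focal points of $L_{q_i}$ while $\gamma_i^*$ remains entirely within the principal stratum, because the quotient orbifold metric typically has positive curvature by O'Neill's formula (e.g.\ the Hopf $S^1$-action on $\mathbb{R}^4$: the quotient is a positively curved metric cone). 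You flag ``the focal-multiplicity bookkeeping'' as the main obstacle, but the accounting as you describe it would miscount.

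Your conclusion is salvageable — an isometry of the quotient does preserve conjugate points along projected geodesics, so with the correct horizontal--vertical decomposition both contributions are preserved by $\varphi$ (the vertical one by the codimension hypothesis, the horizontal one by $\varphi$ being an isometry), and the eigenvalue argument then goes through. Two further points the paper handles that you omit: (i) the horizontal index is only defined along geodesics whose projection stays in the orbifold part $(M/\F)_{orb}$, which is strictly larger than the principal stratum but whose complement has codimension $\geq 2$ in the quotient; one must therefore argue for \emph{almost every} $x_1$ and then invoke continuity of the mean curvature form to conclude everywhere. (ii) One needs $\varphi\bigl((M_1/\F_1)_{orb}\bigr) = (M_2/\F_2)_{orb}$ and that $\varphi$ carries projected horizontal geodesics to projected horizontal geodesics, so that $\varphi(\pi_1\circ\gamma_1)=\pi_2\circ\gamma_2$ as claimed — the paper cites \cite{Swartz} for this.
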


\emph{Step 3: Proving the smoothness of $\varphi$.}

The goal here is to prove that for any smooth $G_{p_2}$-invariant function $f$ on $V_2^{\perp}$, the pullback $\varphi^*f$ is a smooth $G_{p_1}$-invariant function in $V_1^{\perp}$. 
First of all, by using Taylor expansion of first order and basic properties of isometries, we prove that the gradient of $\varphi^*f$, which is well defined on each stratum, is also continuous when passing from one stratum to another. This implies that $\varphi$ is of class $C^1$. 

Secondly, we apply Lemma \ref{sketch-lemma1} above to prove that, on the regular parts, we have $\varphi^*\bigtriangleup_{V_2^{\perp}}f=\bigtriangleup_{V_1^{\perp}} \varphi^{*}f$. This follows from the fact that the Laplacian $\bigtriangleup_{V_i^{\perp}}$ can be obtained from $\bigtriangleup_{V_i^{\perp}}/G_{p_i}$ by 
\begin{equation}
\label{eq-sketch-lemma3}
 \bigtriangleup_{V_i^{\perp}}f= \bigtriangleup_{V_i^{\perp}/G_{p_i}}f -\metric_{i}(\nabla f , H^*_{i}) 
\end{equation}
and $\varphi$ preserves both summands in the right hand side.

Finally, we prove that $\varphi^*\bigtriangleup_{V_2^{\perp}}=\bigtriangleup_{V_1^{\perp}} \varphi^{*}$ holds weakly everywhere.
\begin{lemma}
\label{sketch-lemma3}
$\bigtriangleup \varphi^{*} f=\varphi^{*} \bigtriangleup f$ holds weakly for any smooth $G_{p_2}$-invariant function $f$ in $V_2^\perp$.
\end{lemma}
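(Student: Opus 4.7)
The plan is to verify the weak identity by testing against an arbitrary $\psi \in C_{c}^{\infty}(V_{1}^{\perp})$. Since both $\varphi^{*}f$ and $\varphi^{*}\bigtriangleup f$ are $G_{p_{1}}$-invariant and $G_{p_{1}}$ is compact, I may replace $\psi$ by its $G_{p_{1}}$-average, so I assume throughout that $\psi$ is $G_{p_{1}}$-invariant. The goal becomes the identity
\begin{equation*}
\int_{V_{1}^{\perp}} \varphi^{*}f\cdot \bigtriangleup\psi\, dV \;=\; \int_{V_{1}^{\perp}} \psi\cdot \varphi^{*}\bigtriangleup f\, dV.
\end{equation*}

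Let $\Sigma\subset V_{1}^{\perp}$ be the singular set of the $G_{p_{1}}$-action and $P=V_{1}^{\perp}\setminus\Sigma$ the principal stratum; for each $\eps>0$ set $\Omega_{\eps}=\{x:\dist(x,\Sigma)>\eps\}\subset P$. On $\Omega_{\eps}$, the function $\varphi^{*}f$ is smooth and the pointwise identity $\bigtriangleup(\varphi^{*}f)=\varphi^{*}\bigtriangleup f$ is already available from the previous step, so Green's identity gives
\begin{equation*}
\int_{\Omega_{\eps}}\!\bigl(\varphi^{*}f\cdot \bigtriangleup\psi - \psi\cdot\varphi^{*}\bigtriangleup f\bigr)\,dV \;=\; \int_{\partial \Omega_{\eps}}\!\left(\varphi^{*}f\,\tfrac{\partial\psi}{\partial\nu}-\psi\,\tfrac{\partial(\varphi^{*}f)}{\partial\nu}\right)dA.
\end{equation*}
As $\eps\to 0$, the left-hand side converges to the difference of the two sides of the desired identity, because all integrands are continuous on $V_{1}^{\perp}$ and $\Sigma$ has Lebesgue measure zero. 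Thus the lemma reduces to showing that the boundary integral vanishes in the limit.

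For this, stratify $\Sigma = \Sigma_{1}\sqcup \Sigma_{\geq 2}$ by codimension in $V_{1}^{\perp}$. Each component of $\Sigma_{1}$ is, by a standard fact about linear orthogonal actions of compact Lie groups, a hyperplane fixed pointwise by a reflection in $G_{p_{1}}$. Because $\varphi^{*}f$ is $C^{1}$ and $G_{p_{1}}$-invariant (by the first part of Step~3), and similarly for $\psi$, their normal derivatives vanish identically along $\Sigma_{1}$; hence $\partial\psi/\partial\nu$ and $\partial(\varphi^{*}f)/\partial\nu$ are uniformly $o(1)$ on the part of $\partial\Omega_{\eps}$ close to $\Sigma_{1}$, and the corresponding part of the boundary integral tends to $0$. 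On the other hand $\Sigma_{\geq 2}$ has Hausdorff codimension $\geq 2$, so the $(n-1)$-dimensional area of the part of $\partial\Omega_{\eps}$ near $\Sigma_{\geq 2}$ is $O(\eps)$, while the integrand is uniformly bounded on the compact support of $\psi$ by the $C^{1}$ regularity of $\varphi^{*}f$ and $\psi$; this contribution therefore also vanishes.

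The main obstacle I anticipate is the clean identification of $\Sigma_{1}$ with reflection hyperplanes of $G_{p_{1}}$, together with the bookkeeping near the frontier where $\Sigma_{1}$ meets $\Sigma_{\geq 2}$ (so that the two tubular neighborhoods overlap and the stratawise estimates must be combined). Once these geometric inputs are in place, the analytic content of the proof is just Green's identity followed by a controlled limit, driven by the $C^{1}$ regularity of $\varphi^{*}f$ secured earlier in Step~3.
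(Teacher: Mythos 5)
Your proposal uses the same core analytic tool as the paper (Green's second identity on a shrinking family of domains, followed by a controlled limit driven by the $C^1$ regularity of $\varphi^*f$), but the decomposition of the boundary term takes a genuinely different route. The paper's argument (Lemma~\ref{lemma1-theorem-myers-steenrod}) establishes the pointwise identity $\bigtriangleup\varphi^*f=\varphi^*\bigtriangleup f$ on the entire \emph{regular stratum} $(V_1^\perp)_{reg}$ (leaves of maximal dimension), using orbifold charts $B_i$ via the formula \eqref{eq-0-theorem-myers-steenrod} and the lifted isometry $\ol\varphi:B_1\to B_2$. Because for any SRF the complement of the regular stratum has codimension $\geq 2$, the boundary $\partial W$ can be taken to have arbitrarily small area and the Green boundary term dies for that reason alone. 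Your argument only assumes the pointwise identity on the smaller \emph{principal} stratum $P$, whose complement $\Sigma$ may include codimension-$1$ exceptional strata; you compensate by observing that $G_{p_1}$-invariance of the $C^1$ functions $\varphi^*f$ and (the averaged) $\psi$ forces their normal derivatives to vanish along $\Sigma_1$, killing that part of the boundary integral without any small-area argument. Both routes are sound. What your version buys is that you never need to invoke the orbifold-chart version of the Laplacian formula on exceptional orbits; what it costs is the extra stratified estimate (and you would also need the averaging step, which the paper avoids entirely because its $\partial W$ has negligible area). Two small cautions: the components of $\Sigma_1$ are in general open pieces of codimension-$1$ submanifolds (fixed locally by a reflection in the slice isotropy), not full hyperplanes, though the only property you use — the vanishing of the normal derivative of an invariant $C^1$ function — does hold; and the bookkeeping at the frontier where $\Sigma_1$ meets $\Sigma_{\geq 2}$, which you flag, does need to be carried out (for instance by excising a $\delta$-tube around $\Sigma_{\geq 2}$ first, then letting $\eps\to 0$, then $\delta\to 0$), but it presents no real obstruction.
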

The above lemma and the fact that $\varphi$ is of class $C^1$ 
 allow us to exploit the regularization properties of the Laplacian to prove the smoothness of $\varphi^*f$ (and therefore that of $\varphi$) via a bootstrap type argument.

\begin{remark} In \cite{Terng} the author uses similar techniques to prove a generalization of Chevalley Restriction Theorem. In that situation, the author has to prove that certain functions are polynomial, and this is proved by showing that every derivative of high enough order is zero. The approach in the present paper uses bootstrap lemma instead.
\end{remark}

%
%

\bigskip

\section{Preliminaries}
\label{preliminaries}

\begin{center}

\end{center}
\subsection*{The leaf space}
Let $(M,\F)$ be a singular Riemannian foliation with closed leaves. The foliation induces an equivalence relation $\sim$ on $M$, where $p\sim q$ if and only if $p,q$ lie in the leame leaf. The quotient $M/\sim$ is called \emph{leaf space} of $(M,\F)$ and is denoted by $M/\F$. The canonical map $\pi: M\to M/\F$ gives $M/\F$ with structure of a Hausdorff metric space, where the distance between two point is given by the distance of the corresponding leaves.
Also recall that the image of a \emph{stratum} $\Sigma$ (the set of leaves with the same dimension) is an orbifold of dimension $\dim \pi(\Sigma)=\dim \Sigma -\dim \F|_{\Sigma}$. If $M_{reg}$ denotes the \emph{regular stratum} (the set of leaves with maximal dimension), the \emph{quotient codimension} of $\Sigma$ is
\[
qcodim(\Sigma)=\dim\pi(M_{reg})-\dim \pi(\Sigma)=\dim M-\dim \F-\dim \Sigma +\dim \F|_{\Sigma}.
\]
To say that $M/\F$ \emph{has no boundary} is equivalent to requiring that $qcodim(\Sigma)>1$ for every singular stratum.

The metric space $M/\F$ has a natural smooth structure. More precise, one can define the ring $C^{\infty}(M/\F)$ of \emph{smooth functions} on $M/\F$ to be the ring of functions $f:M/\F\to \RR$ whose pullback $\pi^*f$ is a smooth function on $M$. Notice that by contruction $\pi^*f$ is \emph{basic}, i.e., it is constant along the leaves of $\F$.

A map $F:M_1/\F_1\to M_2/\F_2$ is said to be \emph{smooth}  if for every smooth function $f\in C^{\infty}(M_2/\F_2)$ (in the sense defined above) the pullback $F^*f$ is again a smooth function $C^{\infty}(M_1/\F_1)$. By definition, the canonical projection $\pi:M\to M/\F$ is smooth and a submetry. Moreover, when restricted to the regular part $M_{reg}\to 
M_{reg}/\F$ it is a Riemannian submersion.

Given a point $p\in M$ or a vector $x\in T_pM$, we will denote with $p^*, x^*$ the projections $\pi( p)$, $\pi_*(x)$ respectively.

\subsection*{Non connected foliations. }Along this paper, we will have to consider Riemannian foliations with non connected leaves. This kind of foliations comes up naturally: consider for example a Riemannian homogeneous foliation $(M,G)$. Even if $G$ itself is connected, some isotropy subgroup might not be, and the orbits of $G_p$ under the slice representation might also be disconnected. Therefore the Riemannian homogeneous foliation $(\nu_pM,G_p)$ would be an example of a disconnected singular Riemannian foliation. In general, a \emph{singular Riemannian foliation with disconnected leaves} $(M,\F)$ is a triple $(M,\F^0,\mathsf{K})$ where $(M,\F^0)$ is a (usual) SRF, $\mathsf{K}$ is a group of isometries of $M/\F^0$, and the \emph{non-connected leaves} of $\F$ are just the orbits $\mathsf{K}\cdot L_p$, for $L_p\in \F^0$.

A leaf $L$ of a disconnected foliation $\F$ is called a \emph{principal leaf} if it satisfies the following conditions:
\begin{enumerate}
\item[(1)] each connected component of $L$ is a \emph{principal leaf} of $\F^0$, i.e., a \emph{regular leaf} (a leaf with maximal dimension) that has a trivial holonomy;  see e.g \cite[page 22]{Molino}.
\item[(2)] If there exists an isometry $k\in \mathsf{K}$ which fixes any component of $L$ in $M/\F^0$, $k$ is the identity.
\end{enumerate}

\subsection*{Infinitesimal foliation. }Let $(M,\F)$ be a singular Riemannian foliation with closed leaves. Given a point $p\in M$, let $V_p^{\perp}=\nu_pL_p$, and for some $\epsilon>0$, let $S_p=\exp_p (V_p^{\perp}) \cap B_{\epsilon}(p)$ be a \emph{slice} through $p$, where $B_{\epsilon}(p)$ is the distance ball of radius $\epsilon$ around $p$. In the definition of $S_p$, we assume $\epsilon$ to be small enough that $S_p$ does not contain any focal point of $L_p$.  The foliation $\F$ induces a foliation $\F|_{S_p}^0$ on $S_p$ by letting the leaves of $\F|_{S_p}^0$ be the connected components of the intersection between $S_p$ and the leaves of $\F$. In general,  the foliation $(S_p,\F|_{S_p}^0)$ is not a singular Riemannian foliation with respect to the induced metric on $S_p$. Nevertheless, the \emph{pull-back} foliation $\exp_p^{*}(\F^0)$ is a singular Riemannian foliation on $V_p^{\perp}\cap B_{\epsilon}(0)$ equipped with the Euclidean metric (cf.~\cite[Proposition 6.5]{Molino}), and it is invariant under homotheties fixing the origin (cf.~\cite[Lemma 6.2]{Molino}). In particular, it is possible to extend $\exp^*(\F^0)$ to all of $V_p^{\perp}$, giving rise to a singular Riemannian foliation $(V_p^{\perp},\F^0_p)$ called the \emph{infinitesimal foliation} of $\F$ at $p$. The fundamental group $\pi_1(L_p)$ acts on $V_p^{\perp}/\F^0_p$ by \emph{holonomy maps} in such a way that it induces a disconnected foliation $(V_p^{\perp},\F_p)=(M,\F_p^0,\pi_1(L_p))$. Via the exponential map, the leaves of $\F_p$ correspond to the intersections of the leaves of $\F$ with $S_p$ (i.e., we no longer restrict to the connected components), and in particular the exponential map $\exp_p: V_p^{\perp}\cap B_{\epsilon}(0)\to S_p\In M$ defines a diffeomorphism $\exp_*$ between $(V_p^{\perp}\cap B_{\epsilon}(0))/\F_p$ and a neighborhood of $p^*=\pi(p)$ in $M/\F$.

If $(M,G)$ is Riemannian homogeneous foliation, the infinitesimal foliation $(V_p^{\perp},\F^0_p)$ (respectively the disconnected infinitesimal foliation $(V_p^{\perp},\F_p)$) is again Riemannian homogeneous foliation, given by the action of the identity component of the isotropy group $G_p^0$ (respectively the action of the whole isotropy group $G_p$) on $V_p^{\perp}$.

\subsection*{Orbifold part of the leaf space} Let $(M,\F)$ be a singular Riemannian foliation with closed leaves. A point $p^*\in M/\F$ is called \emph{orbifold point} of $M/\F$ if there is a neighborhood of $p^*$ isometric to a quotient $U/\Gamma$, where $U$ is a Riemannian manifold and $\Gamma$ is a finite group of isometries. The set of orbifold points of $M/\F$ is denoted by $(M/\F)_{orb}$ and called the \emph{orbifold part} of $M/\F$. By \cite{LytchakThorbergsson2} the preimage of $(M/\F)_{orb}$ consists of those points whose infinitesimal foliation is polar, and the complement of $(M/\F)_{orb}$ in $M/\F$ has codimension $\geq 2$.

%
%

\bigskip

\section{Proof of Theorems \ref{main-theorem} and \ref{proposition-main-theorem-no-boundary}}
\label{proof1}

Suppose we have two closed singular Riemannian foliations $(M_i,\F_i)$, $i=1,2$, and an isometry $\varphi:M_1/\F_1\to M_2/\F_2$ that preserves the codimension of the orbits. For $p_i\in M_i$, denote $p_i^*$ its projection under the canonical map $\pi_i:M_i\to M_i/\F_i$.

In order to avoid cumbersome notations, we will denote each basic function $f:M_i\to \mathbb{R}$ and the induced function on $M_i/\F_i$ by the same letter $f$.


We now prove Theorem \ref{main-theorem}, closely following the steps presented in Section \ref{sketch}. We first observe that the main problem can be reduced to a problem in Euclidean space, following standard arguments from the theory of SRF's; see \cite{Molino,AlexToeben2,LytchakThorbergsson2}. Chosen $p_1,p_2$ so that $\varphi(p_1^*)=p_2^*$, $\varphi$ restricts to an isometry $\varphi: (S_{p_1},\metric_1)/\F_1 \to (S_{p_2},\metric_2)/\F_2$.
 Recall that the flat metrics $\metric_{p_i}$  are the limit of metrics $\metric_{\lambda}^{i}=\frac{1}{\lambda^{2}} h_{\lambda}^{*}\metric_{i}$ as $\lambda\to0$, where $h_{\lambda}$ denotes the homothetic transformation around $p_i$. In particular, since the isometry $\varphi$ induces an isometry $\varphi:  (S_{p_1},\metric_{\lambda}^1)/\F_1 \to (S_{p_2},\metric_{\lambda}^2)/\F_2$ for any $\lambda\in(0,1)$, by taking the limit as $\lambda\to0$ we obtain an isometry
 \[
 \varphi_*: (V_{p_1}^{\perp},\metric_{p_1})/\F_{p_1} \to (V_{p_2}^{\perp},\metric_{p_2})/\F_{p_2}.
 \]
This is an isometry between leaf spaces of foliations in Euclidean space. Moreover, around $p_1$,
 $\varphi$ can be written as $\exp^*_2\circ \varphi_*\circ (\exp_1^*)^{-1}$, where $\exp_i^*$ are diffeomorphisms, and therefore $\varphi$ is smooth around $p_1$ if and only if $\varphi_*$ is smooth. Thus in order to prove the theorem, it is enough to check it on Euclidean spaces.
\\

\begin{proposition}
\label{proposition-isometry-preserve-meancurvature}
Let $(\RR^{n_1},\fol_1)$, $(\RR^{n_2},\fol_2)$ be two (possibly non-connected) SRF's with closed leaves, and let $\varphi:\RR^{n_1}/\fol_1\ra \RR^{n_2}/\fol_2$ be an isometry that preserves the codimension of the leaves. Then the mean curvature vector fields  of the corresponding principal leaves are basic and $\varphi$ preserves the projections  of those vector fields.
\end{proposition}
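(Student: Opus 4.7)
The plan is to exploit the flatness of the ambient Euclidean metric, together with the theory of projectable Jacobi fields, to show that the shape operator of a principal leaf in a given horizontal direction is determined up to conjugation by purely quotient-level data: the isometry class of the leaf space along the associated horizontal geodesic and the codimension function on strata. Since by hypothesis $\varphi$ preserves both, the trace of each such shape operator is preserved, yielding both that $H_i$ is basic and that $d\varphi(H_1^*)=H_2^*$.

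I would first fix a principal leaf $L_q$ of $\fol_i$ and a unit vector $x\in\nu_q L_q$. Because the ambient metric is flat, the Jacobi fields along $\gamma(t)=q+tx$ with initial conditions $J(0)=v\in T_qL_q$ and $J'(0)=-S_xv$ are simply $J(t)=(\mathrm{id}-tS_x)v$, and these are exactly the projectable Jacobi fields adapted to $\fol_i$. Diagonalizing $S_x$, each nonzero eigenvalue $\lambda$ corresponds to a zero of such a Jacobi field at $t=1/\lambda$, with multiplicities matching. On the other hand, by the standard slice/focal-point theory for SRF's with closed leaves (cf.\ Molino), these zeros are precisely the focal points of $L_q$ along $\gamma$, occurring exactly at times $t_0$ for which $\gamma(t_0)$ lies on a non-principal leaf; the multiplicity of the focal point equals the drop in leaf-dimension there, which is detectable from the leaf space together with the codimension function on strata. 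Thus the multiset of nonzero eigenvalues of $S_x$ is determined by the geodesic $\gamma^*=\pi_i\circ\gamma$ in $\RR^{n_i}/\fol_i$ and the codimension data along it.

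Consequently, $\langle H,x\rangle = \tr S_x$ depends only on the horizontal direction $x^*$ at the quotient point $q^*$, and not on the chosen lifts. This is exactly the statement that $H_i$ is basic on the principal stratum and therefore descends to a well-defined $H_i^*$ on the regular part of $\RR^{n_i}/\fol_i$. Applying the same reasoning across $\varphi$: for a principal direction $x^*$ at $q_1^*$, the horizontal geodesics $\gamma^*_{x^*}$ and $\gamma^*_{d\varphi(x^*)}$ in the two quotients correspond isometrically via $\varphi$, and the codimension functions along them agree by hypothesis; hence the multisets of eigenvalues of $S_x$ and $S_{d\varphi(x)}$ coincide, so their traces agree, which is precisely $d\varphi(H_1^*)=H_2^*$. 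The main obstacle I anticipate is formalizing the identification between focal-point multiplicities and codimension drops in the non-connected setting, where $\fol=(\fol^0,\mathsf{K})$ and one must check that the holonomy group $\mathsf{K}$, acting discretely, preserves the stratification and the focal count so that the dictionary between Jacobi-field zeros and quotient data survives the passage from connected to non-connected leaves.
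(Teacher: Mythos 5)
Your overall strategy matches the paper's: reduce the statement to showing $\tr(\Sh_{x_1})=\tr(\Sh_{x_2})$ and exploit the Euclidean identity that focal points of $L_q$ along $\gamma(t)=q+tx$ occur at times $1/\lambda_j$, where $\lambda_j$ runs through the nonzero eigenvalues of $\Sh_x$ with multiplicity (\cite[Prop.~4.1.8]{PalaisTerng}). However, your key intermediate claim --- that the focal points of $L_q$ along $\gamma$ occur \emph{exactly} at times $t_0$ for which $\gamma(t_0)$ lies on a non-principal leaf, with multiplicity equal to the drop in leaf dimension --- is false in general. Focal points of a principal leaf can also occur at \emph{regular} leaves; these correspond to conjugate points of $\pi(q)$ along the projected geodesic $\pi\circ\gamma$ in the quotient. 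The paper handles this via the decomposition of the focal index (\cite[Lemma~5.2]{LytchakThorbergsson2}) into a \emph{vertical index}, counting singular leaves by codimension drop (the part you account for), and a \emph{horizontal index}, counting conjugate points of $\pi\circ\gamma$ inside the orbifold part of the quotient (the part you omit). Without this decomposition, the link you assert between Jacobi-field zeros and quotient-level codimension data does not hold; the eigenvalue multiset of $\Sh_x$ is not determined by codimension data alone.

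The conclusion survives anyway --- $\varphi$ preserves the horizontal index because it is an isometry between the orbifold parts, hence sends conjugate points to conjugate points with the same multiplicity, and preserves the vertical index by the codimension hypothesis --- but this requires explicitly invoking the index decomposition and verifying each piece separately. Two further points you should address: (a) one must restrict to the full-measure set of directions $x$ whose projected geodesic stays in $(M/\F)_{orb}$ (which has complement of codimension $\geq 2$), so that ``conjugate point in the quotient'' is meaningful, and then pass to all directions by density and continuity of the mean curvature form; and (b) in the non-connected setting, the isometry $\varphi$ sends $(M_1/\F_1)_{orb}$ to $(M_2/\F_2)_{orb}$ and projected horizontal geodesics to projected horizontal geodesics (cf.~\cite{Swartz}), which is what makes the horizontal-index comparison legitimate --- this is the correct version of the ``survival'' issue you flag at the end, but it needs to be tied to the orbifold identification rather than to the stratification alone.
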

\begin{proof}
This result was proved in Gromoll and Walschap \cite[Theorem 4.1.1]{GromollWalschap} in the case of regular Riemannian foliations. 
 In what follows we will explain how that proof can be adapted in the case of SRF's. 

For $i=1,2$ let $p_i\in M_{i}=\RR^{n_i}$ be a principal point of $\F_i$ such that $\varphi^*(p_1^*)=p_2^*$. Moreover, let $x_i\in V_{p_i}^{\perp}$, $i=1,2$ be horizontal vectors such that $\varphi_*(x_1^*)=x^*_2$. Finally, define $\gamma_i(t)=p_i+tx_i$.

In order to prove the proposition, it is enough to show that $\tr(\Sh_{x_1})=\tr(\Sh_{x_2})$, where $\Sh_{x_i}$ is the \emph{shape operator}  of the leaf $L_{p_i}$ through $p_i$. We will actually show something stronger, namely that every nonzero eigenvalue of $\Sh_{x_1}$ is an eigenvalue of $\Sh_{x_2}$ of the same multiplicity, for almost every $x_1$.

 Since the complement of the orbifold part $(M_1/\F_1)_{orb}$ has codimension $\geq 2$, almost every projected horizontal geodesic stays in  $(M_1/\F_1)_{orb}$ for all time, and  in what follows we will assume that our fixed geodesic $\gamma_1$ has this property.

 Because $\varphi\left((M_1/\F_1)_{orb}\right)=(M_2/\F_2)_{orb}$, and $\varphi$ takes projected  horizontal geodesics in $(M_1/\F_1)_{orb}$ 
to projected horizontal geodesics in $(M_2/\F_2)_{orb}$, we conclude that $\varphi(\pi_1\circ \gamma_1)=\pi_2\circ \gamma_2$; see \cite{Swartz}.

On the one hand, since $\pi\circ\gamma_i$ are contained in $(M_i/\F_i)_{orb}$, we know that the $\varphi$ preserves conjugate points along
 $\pi\circ\gamma_i$, as well as their multiplicity. We also know, by hypothesis, that $\varphi$ preserves codimension of the singular points contained in $\gamma_1$. 

On the other hand, by \cite[Lemma 5.2]{LytchakThorbergsson2} the focal index, i.e., the number of focal points of $L_{p_i}$ along $\gamma_i$ counted with multiplicity, is a sum of two indices, namely:
\begin{itemize}
\item the horizontal index, which counts conjugate points of $\pi(p_i)$ with their multiplicity along $\pi\circ\gamma_i$. The notion of conjugare point along $\pi\circ\gamma_i$ makes sense, since $\pi\circ\gamma_i$ is contained in the orbifold part of $M/\F$.
\item The vertical index, which counts the singular points of $\F$ contained in $\gamma_i$, their multiplicity being the jump in codimension $\textrm{codim}\, L_{\gamma_i(t)}-\textrm{codim}\, \F$ at those points; see also discussion in \cite[Section 5.2]{LytchakThorbergsson2}. 
\end{itemize}
These facts combined, imply that $\varphi$ preserves the focal points of $L_{p_{i}}$ along $\gamma_i$ and their multiplicities. 
                          
Finally recall that, since $M_i$ are Euclidean spaces, 
the focal points of $L_{p_1}$ along $\gamma_1$ are at distance $1/\lambda_1,\ldots 1/\lambda_r$, where
$\{\lambda_1,\ldots, \lambda_r\}$ are the eigenvalues of $\Sh_{x_1}$ counted with the same multiplicity, see \cite[Proposition.4.1.8]{PalaisTerng}. Since $\varphi$ preserves focal points and their multiplicities, we infer that the 
shape operator $\Sh_{x_2}$ of the leaf $L_{p_2}$ has the same eigenvalues as those of $\Sh_{x_1}$, counted with the same multiplicity. In particular, $\tr(\Sh_{x_1})=\tr(\Sh_{x_2})$ whenever the projection of $\gamma_1(t)=p_1+tx_1$ is entirely contained in $(M_1/\F_1)_{orb}$.
Because this condition is open and dense, the fact that $\varphi$ preserves mean curvature vector field  follows from the continuity of the mean curvature form.

\end{proof}


\begin{remark}
\begin{enumerate}
\item The above proposition implies that, given a SRF $\F$ on $\mathbb{R}^{n}$, then each principal leaf $L$ of $\F$ is a \emph{generalized isoparametric} submanifold, i.e., the principal curvatures along a basic vector field of $L$ are constant. 
\item By the proof of Proposition \ref{proposition-isometry-preserve-meancurvature} above, it follows that if $M_1/\F_1$, $i=1,2$, has no boundary, then almost every horizontal geodesic $\gamma_1$ stays in the regular stratum of $M_1$, and all the focal points of $L_{p_1}$ along $\gamma_1$ correspond to conjugate points of $\pi_1(p_1)$ along $\pi_1\circ \gamma_1$. In particular, if $M_1/\F_1$ has no boundary, $\varphi$ preserves the mean curvature even without the assumption of preserving the codimension of the leaves.
 \end{enumerate}
\end{remark}

By the discussion above, Theorems \ref{main-theorem} and \ref{proposition-main-theorem-no-boundary} will both be proved once we show that any isometry between leaf spaces preserving the (basic) mean curvature vector fields is smooth. In order to do this, we show:

\begin{proposition}
\label{proposition-C1}
Let $M_1$ and $M_2$ be complete Riemannian manifolds and $(M_{1},\F_{1})$ and $(M_{2},\F_{2})$ be SRF's with closed leaves. Then an isometry $\varphi: M_{1}/\F_{1}\to M_{2}/\F_{2}$ is of class $C^1$, i.e., for each smooth basic function $f$ on $M_2$, the basic function $\varphi^{*}f$ on $M_1$ is of class $C^{1}$.
\end{proposition}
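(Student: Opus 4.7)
The plan is to work locally near any $p_{1}\in M_{1}$, and to use the infinitesimal-foliation reduction from the start of this section to replace $\varphi$ by the induced isometry $\varphi_{*}: V_{p_{1}}^{\perp}/\F_{p_{1}}\to V_{p_{2}}^{\perp}/\F_{p_{2}}$ between Euclidean leaf spaces with $\varphi_{*}(0^{*})=0^{*}$. It suffices to show that $g:=\varphi_{*}^{*}\tilde f$ is $C^{1}$ near $0\in V_{p_{1}}^{\perp}$, for every smooth basic function $\tilde f$ on $V_{p_{2}}^{\perp}$. On each stratum $\Sigma\In V_{p_{1}}^{\perp}$, $\varphi_{*}$ restricts to an isometry between the Riemannian orbifold strata, hence is smooth by the Myers-Steenrod theorem for Riemannian orbifolds (cf.\ \cite{Strub, Swartz}); so $g|_{\Sigma}$ is smooth and $\nabla g$ is a well-defined horizontal basic vector field on each stratum. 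The problem reduces to proving continuity of $\nabla g$ across strata.

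At a singular point, which we may take to be $0$, I will identify a candidate value for $\nabla g(0)$ via first-order Taylor expansion. The crucial observation is that $V_{p_{i}}^{\perp}/\F_{p_{i}}$ is a metric cone with vertex $0^{*}$ and $\varphi_{*}$ preserves the vertex, so $\varphi_{*}$ is positively $1$-homogeneous: $\varphi_{*}(tv_{1}^{*})=t\,\varphi_{*}(v_{1}^{*})$ for $t>0$. For any $v_{1}\in V_{p_{1}}^{\perp}$ and any lift $v_{2}\in V_{p_{2}}^{\perp}$ of $\varphi_{*}(v_{1}^{*})$, this yields $g(tv_{1})=\tilde f(tv_{2})$, and Taylor expanding $\tilde f$ at $0$ gives
\[
g(tv_{1})=\tilde f(0)+t\,\langle\nabla\tilde f(0),v_{2}\rangle+O(t^{2}).
\]
Hence $g$ has a radial derivative at $0$ in every direction $v_{1}$, equal to $\langle\nabla\tilde f(0),v_{2}\rangle$; this value is independent of the chosen lift, since different lifts $v_{2}, v_{2}'$ satisfy $(tv_{2})^{*}=(tv_{2}')^{*}$ by the scaling invariance of $\F_{p_{2}}$, hence $\tilde f(tv_{2})=\tilde f(tv_{2}')$ by basicness.

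To conclude that $g$ is $C^{1}$, I will show that $\nabla g(q_{n})\to A$ for every sequence of regular points $q_{n}\to 0$, where $A\in V_{p_{1}}^{\perp}$ is the vector representing the linear functional $v_{1}\mapsto\langle\nabla\tilde f(0),v_{2}\rangle$. Since $g$ inherits the Lipschitz constant of $\tilde f$ (because $\varphi_{*}$ is an isometry of leaf spaces), $\{\nabla g(q_{n})\}$ is bounded and admits subsequential limits. At each regular $q_{n}$ the gradient is determined by $\langle\nabla g(q_{n}),w\rangle=\langle\nabla\tilde f(\varphi_{*}(q_{n}^{*})),w'\rangle$, where $(w')^{*}=d\varphi_{*,q_{n}^{*}}(w^{*})$; passing to the limit, using continuity of $\nabla\tilde f$ together with the fact that $d\varphi_{*,q_{n}^{*}}$ converges to the infinitesimal behavior of $\varphi_{*}$ at $0$, identifies every subsequential limit as $A$. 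This simultaneously yields linearity of the candidate functional and the convergence $\nabla g(q_{n})\to A$. I expect the principal technical obstacle to be precisely this last step: controlling, uniformly in $w$, the convergence of the tangent-level linear isometries $d\varphi_{*,q_{n}^{*}}$ to the cone isometry $\varphi_{*}$ as $q_{n}$ approaches a singular stratum. This control relies essentially on the metric-cone structure of the Euclidean quotients and on $\varphi$ being a genuine metric isometry rather than merely Lipschitz.
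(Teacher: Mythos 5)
Your argument has a genuine gap exactly where you flag one, in the final ``passing to the limit'' step, and the difficulty is more serious than your phrasing suggests. The claim that $d\varphi_{*,q_n^*}$ converges (in a sense you would still need to make precise) to ``the infinitesimal behavior of $\varphi_*$ at $0$'' is not established: an isometry of a metric cone fixing the vertex has no a priori differential at the vertex, and there is no obvious uniform control over the family of horizontal linear isometries $d\varphi_{*,q_n^*}$ as $q_n\to 0$. Relatedly, the linearity of your candidate functional $v_1\mapsto\langle\nabla\tilde f(0),v_2\rangle$ is asserted but not justified, since $\varphi_*$ is only a map of quotient cones and not linear, so additivity of the lift assignment $v_1\mapsto v_2$ is not automatic. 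Filling either gap amounts to solving essentially the same problem you set out to solve.

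The paper's proof takes a different route that avoids these issues entirely, and it works directly on $M_1,M_2$ (no Euclidean reduction is needed for this proposition). Fix a neighborhood $P_2$ of $p_2$ in the stratum through $p_2$, a tubular neighborhood $U_2$ of $P_2$, and the closest-point projection $\P_2:U_2\to P_2$; set $f_0=\P_2^*(f|_{P_2})$, a globally smooth basic function on $U_2$ which agrees with $f$ to first order at $p_2$ because $\nabla f(p_2)$ is tangent to the stratum. Since the isometry $\varphi$ maps strata to strata and intertwines the quotient closest-point projections, $\varphi^*f_0=\P_1^*\bigl((\varphi^*f)|_{P_1}\bigr)$, and this is smooth on $U_1$ because $(\varphi^*f)|_{P_1}$ is smooth on the orbifold stratum $P_1$ — this is the only place Myers--Steenrod on strata enters. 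Writing $f=f_0+R$ with $\nabla R(p_2)=0$, and using that an isometry preserves norms of quotient gradients of basic functions, gives $\|\nabla\varphi^*R\|(p)=\|\nabla R\|(p'')\to 0$ as $p\to p_1$; hence $\nabla\varphi^*R(p_1)=0$ and $\varphi^*f=\varphi^*f_0+\varphi^*R$ is $C^1$ at $p_1$. The decisive advantage over your approach is that the first-order part $\varphi^*f_0$ is produced as an actual smooth function rather than as a limit of gradients along regular points, so no convergence of $d\varphi$ near the singular set needs to be established at all.
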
 
\begin{proof}
For $i=1,2$ let $p_i$ be a point in $M_i$, let $P_i$ be a small tubular neighborhood of $L_{p_i}$ in the stratum containing $p_i$, and let $U_i$ be a small tubular neighborhood of $P_i$ of radius $\epsilon$, with closest-point projection $\P_i:U_i\to P_i$. We can make these choices so that $\varphi(\pi_1(p_1))=\pi_2(p_2)$ and $\varphi(\pi_1(P_1))=\pi_2(P_2)$.

If $f$ is a smooth basic function on $M_2$, let $f_{0}$ be the smooth basic function on $U_2$ defined as $f_{0}=\P_2^*(f|_{P_2})$.
Since the gradient of $f$ in $p_2$ is tangent to the stratum, $\nabla f_0= \nabla f$ at $p_2$. Therefore, if we rewrite $f$ as $f=f_0+R$ (locally this is the Taylor formula), we conclude that $\nabla R= 0$ at $p_2$.

The pullback of $f_{0}$ under $\varphi$ is
\[
\varphi^*f_{0}=\varphi^*(\P_2^*(f|_{P_2}))=\P_1^*\big((\varphi^*f)|_{P_1}\big).
\]
It is easy to prove that $\varphi^*f$ is smooth on each stratum of $M_1$, in particular $(\varphi^*f)|_{P_1}$ is smooth and thus $\varphi^*f_{0}$ is smooth on $U_1$. If we write
\[
\varphi^*f=\varphi^*f_0+\varphi^*R,
\]
it now follows that $\varphi^*R$ is smooth on each stratum, and it makes sense to define the gradient $\nabla \varphi^*R$ on each stratum. Moreover, since $\varphi^*R$ is basic, $\nabla \varphi^*R$ is always horizontal and we can compute $\lim_{p\to p_1} \|\nabla \varphi^*R\|$ from the quotient:
\begin{equation}\label{to-zero}
\lim_{p\to p_1} \|\nabla \varphi^*R\|( p)=\lim_{p''\to \varphi(\pi_1( p))} \|\nabla R\|(p'')=0,
\end{equation}
where we used the fact that $\varphi$ is an isometry.

Equation \eqref{to-zero} implies that $\varphi^*R$ is of class $C^1$ ar $p_1$ and $\nabla \varphi^*R(p_1)=0$. In particular $\varphi^*f=\varphi^*f_0+\varphi^*R$ is $C^1$ at $p_1$, and this proves the proposition.
\end{proof}

\begin{remark}
\begin{enumerate}
\item In Proposition \ref{proposition-C1}, the fact that $\varphi$ is an isometry is used only in equation \eqref{to-zero}. Here, all we have really used is the fact that 
 the derivative of $\varphi$ (restricted to each stratum) is locally bounded.
\item Observe that Proposition \ref{proposition-C1} does not use the assumption that $\varphi$ preserves the codimension of the leaves. In particular, every isometry between leaf spaces is of class $C^1$. 
\end{enumerate}
\end{remark}

The next proposition concludes the proof of Theorem \ref{main-theorem} and Theorem \ref{proposition-main-theorem-no-boundary}.
\begin{proposition}
\label{theorem-myers-steenrod}
Let $M_1$ and $M_2$ be complete Riemannian manifolds and $(M_{i},\F_{i})$ SRF's with closed leaves 
such that the mean curvature vector fields $H_{i}$ of the corresponding principal leaves are basic. 
Assume that there exists an isometry $\varphi: M_{1}/\F_{1}\to M_{2}/\F_{2}$ that preserves the mean curvature vector fields  restricted to the principal stratum. Then $\varphi$ is a smooth map. 
\end{proposition}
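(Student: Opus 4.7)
The plan is to prove that for every smooth basic function $f$ on $M_2$, the pullback $u := \varphi^* f$ is a smooth basic function on $M_1$; this is exactly smoothness of $\varphi$. Since smoothness is local, I would fix $p_1 \in M_1$ with $\varphi(p_1^*) = p_2^*$ and work in a neighborhood of $p_1$. Proposition~\ref{proposition-C1} already gives $u \in C^1$, so the strategy is to derive a weak Laplace equation for $u$ and then invoke elliptic regularity in a bootstrap.

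The first step is to establish the identity $\Delta_{M_1} u = \varphi^*(\Delta_{M_2} f)$ classically on the regular stratum of $M_1$. For any smooth basic function $g$ on an SRF whose principal leaves have basic mean curvature $H_i$, the standard decomposition
\[
\Delta_{M_i} g \;=\; \Delta^{h}_{i} g \;-\; \metric(\nabla g, H_i)
\]
holds on the regular stratum, where $\Delta^h_i$ is the horizontal Laplacian, which under the canonical projection agrees with the Laplacian on the orbifold part of $M_i/\F_i$. In particular $\Delta_{M_i} g$ is again smooth and basic. Since $\varphi$ is an isometry of quotients, restricted to the orbifold parts it intertwines the quotient Laplacians and preserves horizontal inner products of gradients; by hypothesis it also matches the projected mean curvature vector fields $H_i^*$. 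Combining these three preservations yields the desired identity wherever the terms are defined.

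The second step is to promote this to a distributional identity on all of $M_1$. The key geometric fact is that every singular stratum $\Sigma \subset M_1$ has codimension at least $2$ in the total space: indeed $\codim_{M_1}(\Sigma) = (\dim \F_1 - \dim \F_1|_{\Sigma}) + qcodim(\Sigma) \geq 2$, since $\Sigma$ has strictly smaller leaf dimension and $qcodim(\Sigma) \geq 1$. Combined with $u \in C^1(M_1)$ and the continuity of $\varphi^*(\Delta_{M_2} f)$, a standard cutoff argument on $\varepsilon$-tubes around the singular strata, whose boundary $(n{-}1)$-dimensional Hausdorff measure vanishes in the limit thanks to the codimension bound, allows integration by parts with vanishing boundary contributions. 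This yields
\[
\Delta_{M_1} u \;=\; \varphi^*(\Delta_{M_2} f)
\]
in the sense of distributions on $M_1$.

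Finally, I would bootstrap via elliptic regularity. Applying Proposition~\ref{proposition-C1} to the smooth basic function $\Delta_{M_2} f$ shows that $\varphi^*(\Delta_{M_2} f) \in C^1$; interior regularity for the Laplacian then upgrades $u$ to $C^3$. Since this conclusion holds uniformly for every smooth basic function, we obtain $\varphi^* g \in C^3$ for every smooth basic $g$, which feeds back into the weak equation to give $\varphi^*(\Delta_{M_2} f) \in C^3$ and hence $u \in C^5$. Iterating, $u \in C^\infty$, completing the proof. The main obstacle is the middle step: one must rule out any distributional defect of $\Delta u$ concentrated along the singular strata, and this is precisely where the codimension-$\geq 2$ property of the singular strata in the total space combines with the global $C^1$-regularity provided by Proposition~\ref{proposition-C1}.
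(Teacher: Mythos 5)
Your proposal is correct and follows essentially the same route as the paper: use basicness of the mean curvature to write $\Delta_{M_i} g = \Delta^h_i g - \metric(\nabla g, H_i)$ on the regular part, deduce the classical identity $\Delta_{M_1}(\varphi^* f) = \varphi^*(\Delta_{M_2} f)$ there using the lifted orbifold isometry, promote it to a distributional identity across the codimension-$\geq 2$ singular set via an $\varepsilon$-tube cutoff (the paper isolates this as Lemma~\ref{lemma1-theorem-myers-steenrod}, with an explicit Green's identity argument), and then bootstrap through Proposition~\ref{proposition-C1} and interior elliptic regularity. One small quantitative remark: with a right-hand side in $C^1 \subset C^{0,\alpha}$, Schauder gives $C^{2,\alpha}$ rather than $C^3$ per step, but since the key point is that regularity increases without bound at each iteration, this does not affect the conclusion $\varphi^* f \in C^\infty$.
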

\begin{proof}
Let $\metric_i$ denote the metric on $M_i$.
Recall that we are using the notation ${H_i}_*$ to denote the projection ${\pi_i}_*H_i$ of the mean curvature vector field on the regular part of $M_i$.
For $i=1,2$, let $p_i$ be a regular point in $M_i$, and let $U_i$ be a neighborhood of $p_i$ that admits a local quotient $\Q_i:U_i\to B_i$, where the manifold $B_i$ is the local model of the orbifold $\pi_i(U_i)\In M_i/\F_i$. We can make these choices so that $\varphi(\pi_1(p_1))=\pi_2(p_2)$ and $\varphi(\pi_1(U_1))=\pi_2(U_2)$. Since
\[
\varphi|_{\pi_1(U_1)}: \pi_1(U_1)\to \pi_2(U_2)
\]
 is an isometry, by \cite{Swartz} it lifts to an isometry $\ol{\varphi}: B_1\to B_2$.

Let $f$ be a smooth basic function of $(M_{2},\F_{2})$. We want to prove that $\varphi^{*}f$ is a smooth basic function of $(M_{1},\F_{1})$.

Clearly $f$ stays basic with respect to $\F_2|_{U_2}$ and thus it defines a function on $B_2$, which we still denote $f$.

We recall that (see e.g., \cite[page 53]{GromollWalschap})
\begin{equation}
\label{eq-0-theorem-myers-steenrod}
\bigtriangleup_{U_i}f= \bigtriangleup_{B_i} f -\metric_{i}( \nabla f, {H_{i}}_*). 
\end{equation}

Set $u:=\bigtriangleup_{M_2} f.$ Equation \eqref{eq-0-theorem-myers-steenrod} implies that  $u$ is a smooth basic function on $(U_{2},\F_{2})$. 

Since $\ol{\varphi}: B_1\to B_2$ is an isometry and $\varphi_* {H_{1}}_*={H_{2}}_*$ by assumption, it easily follows from equation \eqref{eq-0-theorem-myers-steenrod} that
\begin{equation}
\label{eq-1-theorem-myers-steenrod}
\bigtriangleup_{M_1} (\varphi^{*} f)=\varphi^{*} u \qquad \textrm{on }U_1.
\end{equation}
Since $p_1$, $p_2$ were chosen arbitrarily, it follows that $\bigtriangleup (\varphi^{*} f)=\varphi^{*} u $ in the regular part $(M_1)_{reg}$. Since the complement of $(M_1)_{reg}$ in $M_1$ is a locally finite union of submanifolds of codimension $\geq 2$, equation \eqref{eq-1-theorem-myers-steenrod} holds weakly on the whole $M_1$ by the following Lemma.

\begin{lemma}\label{lemma1-theorem-myers-steenrod}
Let $f,u$ be $C^1$ functions on a manifold $M$, and let $M'$ be a submanifold of $M$ such that:
\begin{itemize}
\item $M\setminus M'$ is a locally finite union of submanifold of codimension $\geq 2$.
\item $f,u$ are smooth on $M'$, and $\bigtriangleup f=u$ on $M'$.
\end{itemize}
Then $\bigtriangleup f=u$ holds weakly on $M$, i.e.,

$$\int_{M} f \cdot \bigtriangleup h=\int_{M} u \cdot h$$
for every smooth function  $h$  with compact support on $M$.
\end{lemma} 
\begin{proof}
Let $W$ be a neighborhood of $M\setminus M'$ with smooth boundary that $\partial W$. Let $h$ be a smoth function with compact support on $M$.
By Green's second identity
\begin{equation}\label{eq-1b-theorem-myers-steenrod}
\int_{M-W} \bigtriangleup f \cdot h -\int_{M-W}f \cdot\bigtriangleup h = \int_{\partial W} h\cdot \metric( \nabla f,\eta) -f \cdot \metric( \nabla h,\eta),
\end{equation}
where $\eta$ is the normal vector field of $\partial W$.

Since $\bigtriangleup f=u$ on $M-W\In M'$, equation \eqref{eq-1b-theorem-myers-steenrod} becomes
\begin{equation}
\label{eq-2-theorem-myers-steenrod}
\int_{M-W} u\cdot h -\int_{M-W} f\cdot \bigtriangleup h = \int_{\partial W} h\cdot \metric( \nabla f,\eta) -f\cdot\metric( \nabla h,\eta).
\end{equation}
Fixed an arbitrary positive $\epsilon$, it is possible to choose a small neighborhood $W$ so that
\begin{equation}
\label{eq-3-theorem-myers-steenrod}
\left| \int_{M} u\cdot h - \int_{M-W} u\cdot h \right| < \frac{\epsilon}{3},
\end{equation}
\begin{equation}
\label{eq-4-theorem-myers-steenrod}
\left| \int_{M} f\cdot \bigtriangleup h - \int_{M-W} f\cdot \bigtriangleup h \right| < \frac{\epsilon}{3}.
\end{equation}
Since $M\setminus M'$ has codimension $\geq 2$, we can choose $W$ with boundary of arbitrarily small volume. In particular we can assume
\begin{equation}\label{eq-5-theorem-myers-steenrod}
\left|\int_{\partial W}h\cdot \metric( \nabla f,\eta) -f\cdot\metric( \nabla h,\eta)\right|\leq |\partial W|\cdot \sup_{M} | h\cdot \metric( \nabla f,\eta) -f\cdot \metric( \nabla h,\eta) | < \frac{\epsilon}{3}.
\end{equation}
Equations \eqref{eq-2-theorem-myers-steenrod} through \eqref{eq-5-theorem-myers-steenrod} now prove the Lemma.
\end{proof}

By Lemma \ref{lemma1-theorem-myers-steenrod} above, the equation $\bigtriangleup \varphi^{*} f=\varphi^{*} u$ holds weakly on the whole $M_1$. Since $\varphi^{*}u$ is a function of class $C^{1}$ (recall Proposition \ref{proposition-C1}) we can apply the regularity theory of solutions of linear elliptic equations (see e.g., the  proof of Theorem 3, Section 6.3.1 of Evans \cite{Evans}), and this proves the smoothness of $f$. Therefore $\varphi$ is smooth as well, and Proposition \ref{theorem-myers-steenrod} follows.

\end{proof}


\bibliographystyle{amsplain}

\end{document}